\theoremstyle{plain}
\newtheorem{theorem}{Theorem}[section]
\newtheorem{lemma}[theorem]{Lemma}
\theoremstyle{definition}
\newtheorem{definition}[theorem]{Definition}
\newtheorem{remark}[theorem]{Remark}
\newtheorem{notation}[theorem]{Notation}
\newtheorem*{maintheorem}{Theorem \ref{maintheorem}}
\author{Thomas Wennink\footnote{Department of Mathematical Sciences, University of Liverpool, Liverpool L69 7ZL,  United Kingdom, E-mail address: t.n.wennink@liverpool.ac.uk}}
\affil{University of Liverpool}
\title{Counting the number of trigonal curves of genus 5 over finite fields}
\date{}
\begin{document}
\maketitle

\begin{abstract}
\noindent
The trigonal curves of genus 5 can be represented by projective plane quintics that have 1 singularity of delta invariant 1.
Combining this with a partial sieve method for plane curves we count the number of such curves over any finite field.
The main application is that this then gives the motivic Euler characteristic of the moduli space of trigonal curves of genus 5.
\end{abstract}

\section{Introduction.}
Inside the moduli space $\mathcal{M}_5$ of smooth curves of genus 5 there is a subvariety $\mathcal{T}_5$ parameterizing trigonal curves.
The main result of this article is to count the number $|\mathcal{T}_5(\mathbb{F}_q)|$ for all finite fields $\mathbb{F}_q$.
\begin{maintheorem}
	\textit{The number of smooth trigonal curves of genus $5$ over a finite field $\mathbb{F}_q$, weighted by the size of their automorphism group, is given by}
	\[|\mathcal{T}_5(\mathbb{F}_q)|=q^{11}+q^{10}-q^{8}+1.\]
\end{maintheorem}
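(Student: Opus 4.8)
\noindent\textit{Proof plan.} The plan is to realise $\mathcal{T}_5$ as a quotient stack of a space of plane quintics and reduce the count to a point count on that space. First I would make the classical correspondence with plane quintics precise. If $C$ is a trigonal curve of genus $5$, then by the Castelnuovo--Severi inequality its trigonal pencil $\tau$ is unique, hence canonically attached to $C$; the residual linear system $|K_C-\tau|$ has degree $5$ and projective dimension $2$, is base-point free, and maps $C$ birationally onto a plane quintic $Q$, projection from the excess point recovering $\tau$. Since a plane quintic has arithmetic genus $6$ while $C$ has genus $5$, the total delta-invariant of $Q$ is $1$; a Bézout argument then forces $Q$ to be irreducible and reduced with a single singular point, a node or an ordinary cusp. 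Conversely, normalising such a quintic and projecting from its singular point gives a trigonal genus-$5$ curve, two quintics yield isomorphic curves exactly when they are projectively equivalent (again by uniqueness of $\tau$), and the automorphism groups match. Hence there is an isomorphism of stacks $\mathcal{T}_5\cong[U/\mathrm{PGL}_3]$, where $U\subset\mathbb{P}\bigl(H^0(\mathbb{P}^2,\mathcal{O}(5))\bigr)\cong\mathbb{P}^{20}$ is the locally closed subvariety of quintics with exactly one singular point, of delta-invariant one. In particular $\#\mathcal{T}_5(\mathbb{F}_q)=\#U(\mathbb{F}_q)/\#\mathrm{PGL}_3(\mathbb{F}_q)$ with $\#\mathrm{PGL}_3(\mathbb{F}_q)=q^3(q^3-1)(q^2-1)$; if the count of isomorphism classes is wanted rather than the stacky count, one corrects this on the lower-dimensional locus of quintics with nontrivial projective stabiliser.

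The heart of the argument is computing $\#U(\mathbb{F}_q)$. The unique singular point is Frobenius-stable, hence $\mathbb{F}_q$-rational, and $\mathrm{PGL}_3(\mathbb{F}_q)$ acts transitively on $\mathbb{P}^2(\mathbb{F}_q)$, so $\#U(\mathbb{F}_q)=(q^2+q+1)\,N_0$, where $N_0$ counts quintics whose only singular point is $P_0=[0:0:1]$, of delta-invariant one. Dehomogenising at $Z$ these are the $f=f_2+f_3+f_4+f_5$ (with $f_i$ the degree-$i$ part; vanishing of the constant and linear parts is exactly the condition that $P_0$ is singular) with $f_2\neq 0$ --- a node when $\operatorname{rk} f_2=2$, a cusp when $\operatorname{rk} f_2=1$ plus one further open condition on $f_3$ --- such that $V(f)$ is smooth everywhere else on $\mathbb{P}^2$. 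This last "no further singularity" requirement is what calls for a sieve: starting from the linear family of quintics with the prescribed singularity at $P_0$, I would subtract those acquiring one more singular point, add back those with two more, and so on. Each term reduces to the cardinality of a linear system of quintics with assigned double points, computed once one records whether those points, together with $P_0$, lie in general position, on a line, are infinitely near, and so forth; and since a plane quintic carries only boundedly many singularities --- a sufficiently singular quintic is reducible or non-reduced and hence already excluded --- this "partial" sieve terminates. Carrying out the bookkeeping --- enumerating the geometric types and relative positions of the extra base points and weighting each contribution correctly --- is the step I expect to be the main obstacle.

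Finally I would reassemble $N_0$ from its node and cusp parts, multiply by $q^2+q+1$, and divide by $q^3(q^3-1)(q^2-1)$. That the quotient comes out a polynomial with integer coefficients is already a strong consistency check, as are its leading term $q^{11}$ (matching $\dim\mathcal{T}_5=2\cdot 5+1=11$) and constant term $1$; after the automorphism correction one obtains $q^{11}+q^{10}-q^{8}+1$, which is then matched against the direct computer enumeration over $\mathbb{F}_2$ and $\mathbb{F}_3$.
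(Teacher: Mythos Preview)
Your overall strategy---identify $\mathcal{T}_5$ with plane quintics having a single $\delta=1$ singularity, divide by $\mathrm{PGL}_3$, fix the singular point, and sieve on the locus of extra singularities---is exactly the paper's. There are two substantive gaps, though.

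First, the sieve does \emph{not} terminate for the reason you give. The linear family of quintics with a prescribed double point at $P_0$ contains non-reduced curves (for instance $L^2\cdot C$ with $L$ a line not through $P_0$ and $C$ a cubic nodal at $P_0$), and these have infinitely many singular points; they are not ``already excluded'' from your starting affine space. The paper therefore runs only a \emph{partial} sieve, stopping at $|\lambda|\le N=5$, and then adds a correction term $\sum_{|\lambda|>5}\sigma_5(\lambda)\cdot(\text{curves with exactly a }\lambda\text{-tuple of extra singularities})$. Evaluating this correction is the bulk of the proof: one must classify all reducible and non-reduced quintic types with at least seven singularities (five-line configurations, conic plus three lines, two conics plus a line, cubic plus two lines, cubic plus conic, quartic plus line, as well as curves with a multiple component) and count each type over $\mathbb{F}_q$. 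That long case analysis is precisely what produces the ``$+1$'' in the final formula, so your plan as written is missing the step that generates a nontrivial part of the answer.

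Second, two smaller points. The quantity $\#\mathcal{T}_5(\mathbb{F}_q)$ is already the stacky (groupoid) count $\sum_C 1/|\mathrm{Aut}_k(C)|$, which equals $\#U(\mathbb{F}_q)/\#\mathrm{PGL}_3(\mathbb{F}_q)$ on the nose; there is no further ``automorphism correction'' to perform. And while fixing only the point $P_0$ is a valid normalisation, the paper goes one step further and also fixes the tangent cone (separating split node, non-split node, and ordinary cusp, each with prescribed tangent directions), which replaces $\mathrm{Stab}(P_0)$ by three smaller stabilisers and makes the linear-algebra verifications in the sieve cleaner.
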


This is a step forward in the open problem of computing the cohomology of $\overline{\mathcal{M}}_5$, the moduli space of \emph{stable} genus 5 curves.
Consider the stratification of $\overline{\mathcal{M}}_5$ by topological type.
The moduli spaces $\mathcal{M}_{g,n}$ that appear in the stratification satisfy $g\leq5, n\leq10-2g$.
When $g\leq3$ their $S_n$-equivariant counts over finite fields $\mathbb{F}_q$ are known and polynomials in $q$ (see \cite{kisinlehrerg0} for genus 0, \cite{getzlerg1} or \cite{jonasg1} for genus 1, \cite{jonashe} for genus 2 and \cite{jonasg3} for genus 3).
The missing parts are $\mathcal{M}_{4,1}$, $\mathcal{M}_{4,2}$ and $\mathcal{M}_5$.
If their $S_n$-equivariant counts are polynomials as well then arguing as in Section~3 of \cite{bergtomm4} we get the cohomology and Hodge structure of $\overline{\mathcal{M}}_5$.

We expect the point counts of these spaces to be polynomials in $q$ because of Theorem~F in \cite{chenlan}.
There is a conjectural correspondence due to Langlands (see Section~1.3 in \cite{chenren}) that translates this classification to one on motives of proper smooth stacks over $\mathbb{Z}$.
Assuming this correspondence, by Theorem~F, the cohomology of $\overline{\mathcal{M}}_{4,1}$ must be completely of Tate Hodge type while the cohomologies of $\overline{\mathcal{M}}_{4,2}$ and $\overline{\mathcal{M}}_5$ could in principle also have Tate twists of the cusp form motive $S[12]$.
By Poincar\'e duality it is sufficient to check if this motive appears in $H^{11}$.
It would then appear in $H^{0,11}$, which is not possible since $\overline{\mathcal{M}}_{4,2}$ and $\overline{\mathcal{M}}_5$ do not carry holomorphic 11-forms as they are unirational.
Therefore, assuming Langlands' correspondence, we conclude that the cohomologies of $\overline{\mathcal{M}}_{4,1}$, $\overline{\mathcal{M}}_{4,2}$ and $\overline{\mathcal{M}}_5$ are of Tate Hodge type, which implies that their $S_n$-equivariant point counts over finite fields $\mathbb{F}_q$  are polynomials in q.
(And the same is true of $\mathcal{M}_{4,1}$, $\mathcal{M}_{4,2}$ and $\mathcal{M}_5$ by applying the stratification by topological type and the known results for $g\leq3$ that we cited earlier).

We can decompose the moduli space $\mathcal{M}_5$ into moduli of hyperelliptic curves (whose point count over $\mathbb{F}_q$ is known to be $q^{2g-1}$ for arbitrary genus $g$), moduli of trigonal curves (addressed in this article) and the moduli space $\cal{U}$ of curves that admit an embedding in $\mathbb{P}^4$ as a smooth complete intersection of three quadric hypersurfaces.
From our Theorem~\ref{maintheorem}, and assuming Langlands' correspondence, we deduce that the number of points of $\cal{U}$ over $\mathbb{F}_q$ is a polynomial in $q$.

If the count over finite fields is a polynomial then by Theorem~2.1.8 in \cite{HauselRodKatz} we get the motivic Euler characteristic.
Once we have the $S_n$-equivariant motivic Euler characteristic of all spaces $\mathcal{M}_{g,n}$ with $g\leq5, n\leq10-2g$, we can recover the cohomology and Hodge structure of $\overline{\mathcal{M}}_5$.
The advantage of a point count approach over this topological one is that, if we know the result is a polynomial, we can apply certain tricks.
For example we can determine the polynomial by calculating its evaluation at a finite number of finite fields $\mathbb{F}_q$.
And if we can calculate the top half coefficients of $|\overline{\mathcal{M}}_5(\mathbb{F}_q)|$ then by Poincar\'e duality this determines the polynomial.
Neither of these two methods have been applied in this article but they open possibilities for the remaining calculations required to determine the cohomology of $\overline{\mathcal{M}}_5$.
Also, the fact that, if the conjecture holds, the point count is a polynomial suggests that the calculations would not be unreasonably difficult.

Our strategy to count the number of trigonal curves of genus 5 uses the correspondence with projective plane quintics that have 1 singularity of delta invariant 1 and no other singularities.
We apply a sieve method similar to the one used by Bergstr\"om in \cite{jonasg3} to count this number:

We take all plane quintics that have a singularity of delta invariant 1 at a fixed point $P$ and then we add and subtract various loci of quintics with singular points besides $P$.
After each step of this sieve method more singular curves will have been removed exactly once.
Because there are plane quintics with infinitely many singularities, this procedure will not terminate and so it has to be stopped at some point.
We choose to stop after making sure that all curves with 1 to 5 singularities besides $P$ have been removed exactly once.
The computations for this are done in Section~\ref{sievepart}.
For curves with at least 6 singularities besides $P$ we count in Section~\ref{explicitpart} how many curves there are with exactly that many singularities.
We can then correct the count so that all singular curves been removed exactly once.

The results of our counts are combined in Section~\ref{results}, which also has a weblink to a computer program we have written that counts $|\mathcal{T}_5(\mathbb{F}_2)|$ and $|\mathcal{T}_5(\mathbb{F}_3)|$.
The results of this program agree with our main result, Theorem~\ref{maintheorem}.

\subsection*{Acknowledgements}
I am grateful to my master advisor Carel Faber for suggesting me this problem and for explaining the arguments in the introduction to me.
I would like to thank him and my advisor Nicola Pagani for their generous help and support.
I thank Orsola Tommasi for bringing the article by Vakil and Wood to my attention.
I would like to thank both her and Dan Petersen for helpful discussions about the relation between point counts over finite fields and cohomology.

This is a pre-print of an article published in Geometriae Dedicata. The final authenticated version is available online at:\\ \noindent https://doi.org/10.1007/s10711-019-00508-3.

\section{Preliminaries.}
With $k$ we denote a fixed finite field with $q$ elements.
We define $k_i$ to be the finite field extension of $k$ with $q^i$ elements.

In this article the Frobenius map $\mathcal{F}$ is the geometric Frobenius.
On $\mathbb{P}^2(k)$ it is the endomorphism defined by $(x:y:z)\mapsto(x^q:y^q:z^q)$.
\\
\\
We write $\lambda=[1^{\lambda_1},\ldots,v^{\lambda_v}]$ for the partition of $\sum_{i=1}^v i\cdot\lambda_i$ where $i$ appears $\lambda_i$ times.
The weight of a partition $\lambda$ is denoted by $|\lambda|:=\sum_{i=1}^v i\cdot\lambda_i$.

Given two partitions $\mu,\lambda$, we say $\mu\subseteq\lambda$ if $\mu_i\leq\lambda_i$ for all $i$.
\\
\\
Let $X$ be a scheme defined over $k$.
An $n$-tuple $(x_1,\ldots,x_n)$ of distinct subschemes of $X_{\bar{k}}$ is called a \emph{conjugate $n$-tuple} if $\mathcal{F}(x_i)=x_{i+1}$ for $1\leq i<n$ and $\mathcal{F}(x_n)=x_1$, where $\mathcal{F}$ is the Frobenius map.
	
A $|\lambda|$-tuple $(x_1,\ldots,x_{|\lambda|})$ of distinct subschemes of $X_{\bar{k}}$ is called a $\lambda$-tuple if it consists of $\lambda_1$ conjugate $1$-tuples, followed by $\lambda_2$ conjugate $2$-tuples, etc.
For a scheme $X$ defined over $k$ we write $X(\lambda)$ for the set of $\lambda$-tuples of points of $X$.
\\
\\
Let $C$ be a plane quintic defined by the polynomial equation $\sum_{i=0}^{5} F_i(x,y)z^{5-i}$ and let $L,L'$ be lines.
We say $C$ has tangents $L,L'$ at $(0:0:1)$ if $C$ has a singularity of multiplicity $2$ at $(0:0:1)$ and ${L\cdot L'}=F_2(x,y)$.
We say $C$ has tangents $L,L'$ at a point $P$ if there exists a linear transformation $\phi$ such that $\phi(P)=(0:0:1)$ and $\phi(C)$ has tangents $\phi(L),\phi(L')$ at $(0:0:1)$.
\\
\\
A singularity of a projective plane curve has delta invariant 1 if and only if it is an ordinary node or an ordinary cusp.

\section{Plane curves.}
\label{whatwecount}
The main result of this article is to count
\begin{equation}
\label{startcount}
|\mathcal{T}_5(k)|:=\sum_{C/k}\frac{1}{|\text{Aut}_k(C)|},
\end{equation}
where the sum is over representatives of $k$-isomorphism classes of smooth trigonal curves of genus 5 over $k$.

The dual system of a $g^1_3$ on a given trigonal curve of genus $5$ is a $g^2_5$.
This gives a correspondence between trigonal curves of genus $5$ and projective plane quintics that have a singularity of delta invariant $1$ and no other singularities (this follows from the discussion in chapter 3 of \cite{acgh}).

So in Equation (\ref{startcount}) it is equivalent to instead sum over representatives of $k$-isomorphism classes of plane quintics that have a singularity of delta invariant $1$ and no other singularities.
Because every automorphism extends to $\mathbb{P}^2_k$, $\text{Aut}_k(C)$ is a subgroup of $\text{PGL}_3(k)$.
So the automorphism group of $C$ is the stabilizer of $C$ for the action of $\text{PGL}_3(k)$ on $\mathbb{P}^2_k$.
From the orbit-stabilizer theorem we obtain
\begin{equation}
\label{noautom}
\sum_{C/k}\frac{1}{|\text{Aut}_k(C)|}=\sum_{C/k}\frac{1}{|\text{Stab}_k(C)|}=\sum_{C/k}\frac{|\text{Orb}_k(C)|}{|\text{PGL}_3(k)|}=\frac{|T(k)|}{|\text{PGL}_3(k)|}.
\end{equation}
Here $T(k)$ is the space of plane $k$-quintics with exactly 1 singularity, which has delta invariant $1$.
So in order to compute this sum it is sufficient to count plane quintics without having to worry about their automorphism group.

The curves in $T(k)$ can be separated into those which have an ordinary cusp and those which have an ordinary node.
When a node has both tangents defined over $k$ we say it is a split node.
If the tangents form a conjugate pair it is a non-split node.

Consider a curve in $T(k)$ that has a split node.
Since the curve has no other singularities, the node must be defined over $k$.
So we can apply a $k$-linear coordinate change that maps the node to $(0:0:1)$ and its tangents to $\{x=0\}$ and $\{y=0\}$.
This means that we can count the number of curves in $T(k)$ that have a split node by first counting those that have a singularity at $(0:0:1)$ with tangents $\{x=0\},\{y=0\}$, and then dividing by a suitable subset of $\text{PGL}_3(k)$.
We develop some notation for this.

\begin{notation}
	We write $\mathbf{P}$ for the fixed point $(0:0:1)\in\mathbb{P}^2_k$.
\end{notation}

\begin{notation}
	We write $\mathbf{C}\cong\mathbb{P}^{20}_k$ for the space of plane quintics.
\end{notation}

\begin{definition}
	We define $\mathbf{C}_\text{split}$ to be the subset of $\mathbf{C}$ consisting of curves that have a singularity at $\mathbf{P}$ of multiplicity $2$ with tangents $\{x=0\}$ and $\{y=0\}$.
	The subset $\mathbf{D}_\text{split}\subset\mathbf{C}_\text{split}$ by definition consists of curves that have no singularities besides $\mathbf{P}$.
\end{definition}
Let $\text{Stab}_k(\mathbf{P},\{x,y\})$ be the subgroup of $\text{PGL}_3(k)$ that fixes the set $\{xy=0\}$.
The subgroup $\text{Stab}_k(\mathbf{P},\{x,y\})$ is also the subgroup that fixes $\mathbf{D}_\text{split}$, so we obtain
\[
\frac{|\{C\in T(k)\;|\;\text{$C$ has a split node}\}|}
{|\text{PGL}_3(k)|}
=
\frac{|\mathbf{D}_\text{split}|}
{|\text{Stab}_k(\mathbf{P},\{x,y\})|}.
\]
We now introduce a similar notation for non-split nodes and cusps.
\begin{definition}
	Let $\alpha$ be a fixed element of $k_2$ that is not in $k$.
	The set $\mathbf{C}_\text{non-split}\subset\mathbf{C}$ is given by the curves that have a singularity at $\mathbf{P}$ of multiplicity $2$ with tangents $\{x+\alpha y=0\}$ and $\{x+\mathcal{F}(\alpha)y=0\}$.
	The set $\mathbf{C}_\text{cusp}\subset\mathbf{C}$ consists of the curves that have a singularity at $\mathbf{P}$ of multiplicity $2$ with double tangent $\{y=0\}$ and nonzero coefficient for $x^3z^2$.
	The subsets $\mathbf{D}_\text{non-split}\subset\mathbf{C}_\text{non-split}$ and $\mathbf{D}_\text{cusp}\subset\mathbf{C}_\text{cusp}$ are given by the curves that have no singularities besides $\mathbf{P}$.
\end{definition}
Using (\ref{noautom}) we obtain
\begin{align}
\label{stabs}
\begin{split}
|\mathcal{T}_5(k)|
= &
\frac{|T(k)|}{|\text{PGL}_3(k)|}
=
\frac{|\mathbf{D}_\text{split}|}{|\text{Stab}_k(\mathbf{P},\{x,y\})|}
\\ & +
\frac{|\mathbf{D}_\text{non-split}|}{|\text{Stab}_k(\mathbf{P},\{x+\alpha y,x+\mathcal{F}(\alpha)y\})|}
+
\frac{|\mathbf{D}_\text{cusp}|}{|\text{Stab}_k(\mathbf{P},\{y\})|}
.
\end{split}
\end{align}
We now compute the number of elements of the 3 stabilizer subgroups in Equation (\ref{stabs}) of $\text{PGL}_3(k)$.
The matrices that fix $P$, $\{x=0\}$ and $\{y=0\}$ have the form
\[\begin{pmatrix}
a & 0 & 0 \\
0 & b & 0 \\
c & d & 1
\end{pmatrix}\] where $ab\neq0$.
There are $q^2(q-1)^2$ such matrices.
We can also permute the tangents $\{x=0\}$ and $\{y=0\}$ which adds a factor $2$, so 
\begin{equation}
\label{stab_s}
|\text{Stab}_k(\mathbf{P},\{x,y\})|=2q^2(q-1)^2.
\end{equation}

If $\text{char}(k)\neq2$ then the field $k$ has a quadratic nonresidue $r$ and we can take $\alpha$ such that $\alpha^2=r$.
The matrices that fix $P$, $\{x+\alpha y=0\}$ and $\{x+\mathcal{F}(\alpha)y=0\}$ have the form 
\[\begin{pmatrix}
a & rb & 0 \\
b & a & 0 \\
c & d & 1
\end{pmatrix}
\text{ if $\text{char}(k)\neq2$}
,
\quad
\begin{pmatrix}
a & \frac{\alpha\cdot\mathcal{F}(\alpha)\cdot(a+b)}{\alpha+\mathcal{F}(\alpha)} & 0 \\
\frac{a+b}{\alpha+\mathcal{F}(\alpha)} & b & 0 \\
c & d & 1
\end{pmatrix}
\text{ if $\text{char}(k)=2$}
\]
where $a\neq0$ or $b\neq0$.
In either case we obtain $q^2(q^2-1)$ matrices. Permuting the tangents adds a factor $2$, so
\begin{equation}
\label{stab_ns}
|\text{Stab}_k(\mathbf{P},\{x+\alpha y,x+\mathcal{F}(\alpha)y\})|=2(q^4-q^2).
\end{equation}

The matrices that fix $P$ and $\{y=0\}$ have the form
\[\begin{pmatrix}
a & b & 0 \\
0 & c & 0 \\
d & e & 1
\end{pmatrix}\] where $ac\neq0$.
So we obtain
\begin{equation}
\label{stab_c}
|\text{Stab}_k(\mathbf{P},\{y\})|=q^3(q-1)^2.
\end{equation}

\section{The partial sieve method.}
\label{howwecount}
We will compute $|\mathbf{D}_\text{split}|$, $|\mathbf{D}_\text{non-split}|$ and $|\mathbf{C}_\text{cusp}|$ using a sieve method.
We give full details for the case of $|\mathbf{D}_\text{split}|$, the other two cases are similar.

\begin{definition}
	Let $\mathbf{C}'$ be a subset of $\mathbf{C}$ and let $S\subset\mathbb{P}^2(\bar{k})$ be a set of points.
	We define $\mathbf{C}'(S)$ to be the subset of $\mathbf{C}'$ consisting of those curves that have singularities at the points in $S$.
\end{definition}
The idea of the sieve principle is to compute $|\mathbf{C}_\text{split}|$ and then sieve it by adding or subtracting $|\mathbf{C}_\text{split}(S)|$ for various $S\subset\mathbb{P}(\bar{k})-\{\mathbf{P}\}$.
The amount we add or subtract is chosen in such a way that in the end all curves with singularities other than $P$ have been subtracted exactly once from $|\mathbf{C}_\text{split}|$.

This sieve will not terminate because there are curves that have infinitely many singularities, i.e. those with irreducible components of higher multiplicity.
So we choose a positive integer $N$ and only compute the sieve for sets of points $S$ such that $|S|<N$.

For any $k$-curve the conjugates of its singular points are singular as well.
So it is enough to consider those $S$ that consist of $\lambda$-tuples of points for some $\lambda$.
We define
\begin{equation}
\label{sieveeq}
\mathcal{S}_{\text{split},N}:=\sum_{|\lambda|\leq N}\left((-1)^{\sum_i \lambda_i}\cdot\sum_{S\in(\mathbb{P}^2-\{\mathbf{P}\})(\lambda)}|\mathbf{C}_\text{split}(S)|\right).
\end{equation}
(The sum includes the empty partition, which accounts for $|\mathbf{C}_\text{split}|$.)
Now all curves that have at most $N$ singularities besides $\mathbf{P}$ have been subtracted exactly once.
We then have to account for the rest of the curves.
If a curve has a $\lambda$-tuple of singularities besides $\mathbf{P}$, then to ensure it has been subtracted once in total we need to add it $\sigma_N(\lambda)$ times, where
\[\sigma_N(\lambda):=-\sum_{\substack{\mu\subseteq\lambda \\ |\mu|\leq N}}(-1)^{\sum_i \mu_i}\prod_{i=1}^v\binom{\lambda_i}{\mu_i}.\]

\begin{definition}
	Let $\mathbf{C}'\subset\mathbf{C}$ and let $\lambda$ be a partition.
	Here we allow for $\lambda$ to be an infinite partition $[1^{\lambda_1},\ldots,i^{\lambda_i},\ldots]$.
	We define $\mathbf{C}'(\lambda)$ to be the subset of $\mathbf{C}'$ consisting of those curves whose singularities form precisely a $\lambda$-tuple of points.
	We define $\mathbf{C}'(\mathbf{P},\lambda)$ to be the subset of $\mathbf{C}'$ consisting of those curves whose singularities besides $\mathbf{P}$ form precisely a $\lambda$-tuple of points.
\end{definition}
We have
\[|\mathbf{D}_\text{split}|=\mathcal{S}_{\text{split},N}+\sum_{|\lambda|>N}|\mathbf{C}_\text{split}(\mathbf{P},\lambda)|\cdot \sigma_N(\lambda).\]
(Note that this sum is well defined as there are only finitely many $\lambda$ for which $\mathbf{C}_\text{split}(\mathbf{P},\lambda)$ is non-empty.)

\begin{definition}
	Let $C$ be a projective plane curve, we define $\delta_k^1(C)$ to be the number of singularities over $k$ on $C$ of delta invariant $1$.
\end{definition}
Let $[\lambda,1^1]$ be the partition $\mu$ where $\mu_1=\lambda_1+1$ and $\mu_i=\lambda_i$ for $i>1$.
Since $\text{Stab}_k(\mathbf{P},\{x,y\})$ is the group that fixes $\mathbf{C}_\text{split}(\mathbf{P},\lambda)$, we have
\[\frac{|\mathbf{C}_\text{split}(\mathbf{P},\lambda)|}{|\text{Stab}_k(\mathbf{P},\{x,y\})|}=\frac{1}{|\text{PGL}_3(k)|} \sum_{C\in \mathbf{C}([\lambda,1^1])}|\{\text{ord. split nodes over $k$ on $C$}\}|.\]
Now Equations (\ref{stabs}), (\ref{stab_s}), (\ref{stab_ns}) and (\ref{stab_c}) give us
\begin{align}
\label{finalsieveeq}
\begin{split}
|\mathcal{T}_5(k)|
= &
\frac{\mathcal{S}_{\text{split},N}}{2q^2(q-1)^2}
+
\frac{\mathcal{S}_{\text{non-split},N}}{2(q^4-q^2)}
+
\frac{\mathcal{S}_{\text{cusp},N}}{q^3(q-1)^2}
\\
&
+\frac{1}{|\text{PGL}_3(k)|}\sum_{|\lambda|>N}\sum_{C\in \mathbf{C}([\lambda,1^1])}\delta_k^1(C)\cdot\sigma_N(\lambda)
.
\end{split}
\end{align}
The choice of $N$ is made by considering the difficulty of the first 3 summands of this equation versus the difficulty of the last one.
The former is harder for high $N$, the latter is harder for low $N$.
We choose $N$ to be $5$.

\section{The case of at most 5 extra singularities.}
\label{sievepart}
In this section we will compute $\mathcal{S}_{\text{split},5}$, $\mathcal{S}_{\text{non-split},5}$ and $\mathcal{S}_{\text{cusp},5}$.
We start with $\mathcal{S}_{\text{split},5}$.

To find $\mathcal{S}_{\text{split},5}$ we have to compute $|\mathbf{C}_\text{split}(S)|$ for all $S\in(\mathbb{P}^2-\{\mathbf{P}\})(\lambda)$ with $|\lambda|\leq5$.
The space $\mathbf{C}_\text{split}(S)$ can be described by applying $3\cdot|S|$ linear conditions to $\mathbf{C}_\text{split}(\emptyset)\cong\mathbb{A}_k^{15}$.
Because $S$ is a $\lambda$-tuple of points, all linear conditions can be expressed using coefficients over $k$.
The dimension of $\mathbf{C}_\text{split}(S)$ as a $k$-vector space is therefore the same as its dimension as a $\bar{k}$-vector space.
So applying a $\bar{k}$-linear transformation to $S$ does not change the dimension over $k$ of $\mathbf{C}_\text{split}(S)$.
This means we can split up the right hand side of Equation (\ref{sieveeq}) for different types of configurations of points.

For considerations of space we have not written all the conditions for some of the configurations in the following Lemma.
What the conditions are is clear from the fact that all cases are disjoint.
\newpage
\begin{lemma}
\label{sievetypes}
	The spaces $\mathbf{C}_\text{\emph{split}}(S)$ for all $S\in(\mathbb{P}^2-\{\mathbf{P}\})(\lambda)$ with $|\lambda|\leq5$ are given as follows.	
	\[\resizebox{\textwidth}{!}{\begin{tabular}{cc}
		\emph{configuration of} $S\cup\{\mathbf{P}\}$  & $\mathbf{C}_\text{\emph{split}}(S)$ \\
		\midrule
		$\mathbf{P}$ and at least 2 more points on a line that is not a tangent line of $\mathbf{P}$ & $\emptyset$ \\
		$\mathbf{P}$ and at least 4 more points on a line & $\emptyset$ \\
		5 points in $S$ on a line & $\mathbb{A}_k^{4}$ \\
		4 points in $S$ on a line & $\mathbb{A}_k^{17-3\cdot|S|}$ \\
		$\mathbf{P}$ and 3 points on a line and $\mathbf{P}$ and 2 points on another line & $\mathbb{A}_k^{4}$ \\
		$\mathbf{P}$ and 2 more points on a line, 3 other points on another line & $\mathbb{A}_k^{2}$ \\
		$\mathbf{P}$ and 2 more points, one of those two and 3 other points on another line & $\mathbb{A}_k^{3}$ \\
		$\mathbf{P}$ and 2 points on a line, $\mathbf{P}$ and 2 points on another line & $\mathbb{A}_k^{17-3\cdot|S|}$ \\
		$\mathbf{P}$ and 3 more points on a line & $\mathbb{A}_k^{18-3\cdot|S|}$ \\
		$\mathbf{P}$ and 2 more points on a line & $\mathbb{A}_k^{16-3\cdot|S|}$ \\
		6 points on an irreducible conic & $\mathbb{A}_k^{3}$ \\
		general position & $\mathbb{A}_k^{15-3\cdot|S|}$
	\end{tabular}}\]
\end{lemma}
\begin{proof}
	The first two configurations contradict the condition that there is an ordinary node at $\mathbf{P}$.
	We write the details only for one other case.
	(The rest is proved in a similar way.)
	
	Let $S=\{p_1,\ldots,p_5\}$ such that that $p_1,p_2,\mathbf{P}$ are on a line and $p_2,p_3,p_4,p_5$ are on another line.
	Any curve in $\mathbf{C}_\text{split}(S)$ must have the 2 lines as components.
	If the line through $\mathbf{P}$ is not either $\{x=0\}$ or $\{y=0\}$ then $\mathbf{C}_\text{split}(S)=\emptyset$.
	Without loss of generality we may assume the line to be $\{y=0\}$.
	Now $\mathbf{C}_\text{split}(S)$ is isomorphic to the space of cubics that pass through $p_1,p_3,p_4,p_5$ and have multiplicity 1 at $\mathbf{P}$ with tangent $\{x=0\}$.
	If these conditions are independent then $\mathbf{C}_\text{split}(S)\cong\mathbb{A}_k^3$.
	We apply a $\bar{k}$-linear transformation that keeps $\mathbf{P}$ fixed and maps the points in $S$ to $(1:0:0),(0:1:0),(1:1:0),(1:\alpha:0),(1:0:1)$, for some $\alpha\neq0,1$.
	It is now easy to check that the conditions are independent.
\end{proof}

\begin{remark}
	The dimension of the general position case $\mathbb{A}_k^{15-3\cdot|S|}$ illustrates why $N=5$ is a natural choice.
\end{remark}
The following definition will be helpful when counting how often $|\mathbf{C}_\text{split}(S)|$ appears in Equation (\ref{sieveeq}) for each type of configuration of $S\cup\{\mathbf{P}\}$.
\begin{definition}
	For any scheme $X$ defined over $k$ and $w\in\mathbb{Z}_{\geq0}$ we define
	\[\pi_w(X):=\sum_{|\lambda|=w}(-1)^{\sum_i\lambda_i}\cdot |X(\lambda)|.\]
\end{definition}

From Proposition 3.7 in \cite{vakilwood} we find that the inverse Hasse-Weil Zeta function generates $\pi$.
That is, if $X$ is a scheme of finite type over $k$, we have
\[\frac{1}{Z(X;t)}=\sum_{w=0}^\infty\pi_w(X)t^w.\]

Using the fact that $Z(\mathbb{P}^n;t)=\frac{1}{(1-t)(1-qt)\cdots(1-q^nt)}$ and $Z(\mathbb{P}^n-\{\mathbf{P}\};t)=\frac{1}{(1-qt)\cdots(1-q^nt)}$ we deduce
\begin{align}
\label{piiszero}
\begin{split}
\pi_w(\mathbb{P}^n)&=0\quad\text{for}\quad w\geq n+2, \\
\pi_w(\mathbb{P}^n-\{\mathbf{P}\})&=0 \quad\text{for}\quad w\geq n+1.
\end{split}
\end{align}

\begin{lemma}
	\label{splitresult}
	The sieve count for split nodes is
	\[\frac{\mathcal{S}_{\text{\emph{split}},5}}{2q^2(q-1)^2}=\frac{1}{2}(q^{11}+q^{10}).\]
\end{lemma}
\begin{proof}
	We consider the contribution to Equation (\ref{sieveeq}) for $N=5$ of the case where we have 6 points on an irreducible conic.
	A conic is determined by any 5 of its points so by Lemma~\ref{sievetypes} the contribution is given by $|\mathbb{A}_k^{3}|$ times the number of irreducible $k$-conics through $\mathbf{P}$ times $\pi_5(\mathbb{P}^2-\{\mathbf{P}\})$. By (\ref{piiszero}) this contribution is zero.
	
	Similarly for all other cases where the points are not in general position, there will be a factor $\pi_n(\mathbb{P}^1)$ with $n\geq 3$ or $\pi_n(\mathbb{P}^1-\{\mathbf{P}\})$ with $n\geq2$.
	So we only need to count the contribution of the general position case.
	To count how many $\lambda$-tuples of points are in general position we take all of $(\mathbb{P}^2-\{\mathbf{P}\})(\lambda)$ and subtract the other cases.
	After multiplying by $(-1)^{\sum_i\lambda_i}$ and summing over all $\lambda$ of weight $w$, we obtain $\pi_w(\mathbb{P}^2-\{\mathbf{P}\})$ minus terms that come from the other cases.
	The latter all have a factor zero.
	We conclude that (\ref{sieveeq}) can be rewritten as
	\begin{equation*}
	\mathcal{S}_{\text{split},5} = \sum_{i=1}^5\pi_{i}(\mathbb{P}^2-\{\mathbf{P}\})\cdot q^{15-3i} = q^{15}-q^{14}-q^{13}+q^{12}.
	\end{equation*}	
\end{proof}

\begin{lemma}
	\label{nonsplitresult}
	The sieve count for non-split nodes is
	\[\frac{\mathcal{S}_{\text{\emph{non-split}},5}}{2(q^4-q^2)} = \frac{1}{2}(q^{11}-q^{10}).\]
\end{lemma}
\begin{proof}
	Let $\lambda$ be a partition such that $|\lambda|\leq5$.
	If $S\in(\mathbb{P}^2-\{\mathbf{P}\})(\lambda)$ contains points on 1 of the 2 tangent lines through $\mathbf{P}$ then it also has to contain the conjugate points on the other tangent line.
	Because of this there are only few cases for which $\mathbf{C}_\text{non-split}(S)$ is nonempty.
	In those few cases we can apply a $\bar{k}$-linear transformation and use the dimension count from Lemma~\ref{sievetypes}.
	The rest of the argument is very similar to the proof of Lemma~\ref{splitresult} and we obtain $\mathcal{S}_{\text{split},5}=\mathcal{S}_{\text{non-split},5}$.
\end{proof}

\begin{lemma}
	\label{cuspresult}
	The sieve count for cusps is
	\[\frac{\mathcal{S}_{\text{\emph{cusp}},5}}{q^3(q-1)^2} = q^{10}-q^8.\]
\end{lemma}
\begin{proof}
	Besides the conditions on $\mathbf{P}$ and the double tangent line we have the extra condition that the coefficient of $x^3z^2$ must be nonzero.
	Because of this the configurations described in Lemma~\ref{sievetypes} are not sufficient.
	Let $S\in(\mathbb{P}^2-\{\mathbf{P}\})(\lambda)$ for some partition $\lambda$ with $|\lambda|\leq5$.
	A configuration of points $S\cup\{\mathbf{P}\}$ is no longer in general position when $S$ contains 2 points on $\{y=0\}$ or 4 points on an irreducible conic that has tangent $\{y=0\}$ at $\mathbf{P}$. The line $\{y=0\}$ or respectively the conic would have to be a component on any curve in $\mathbf{C}_\text{cusp}(S)$, which contradicts having an ordinary cusp at $\mathbf{P}$.
	
	The only configurations $S\cup\{\mathbf{P}\}$ for which $\mathbf{C}_\text{cusp}(S)$ is nonempty are the general position case and the case when there are 4 or 5 points on a line that does not pass through $\mathbf{P}$.
	For both cases we can compute the dimension of $\mathbf{C}_\text{cusp}(S)$ in the same way as in the proof of Lemma~\ref{sievetypes}.
	In all cases, except the general position case with $|S|=5$, this can be done independently from the coefficient of $x^3z^2$.
	So in these cases $\mathbf{C}_\text{cusp}(S)$ will be of the form $\mathbb{A}^d-\mathbb{A}^{d-1}$.
	When we look at all the ways we could have a non-ordinary cusp and 5 other singularities we see that the points will never be in general position.
	So for the general position case with $|S|=5$, the space $\mathbf{C}_\text{cusp}(S)$ is a point.
	
	The rest of the argument is very similar to the proof of Lemma~\ref{splitresult} and substituting terms of the form $q^d$ with $q^d-q^{d-1}$ in the expression for $\mathcal{S}_{\text{split},5}$ gives us the expression for $\mathcal{S}_{\text{cusp},5}$.
\end{proof}
    
\section{The case of more than 5 extra singularities.}
\label{explicitpart}
In this section we will compute the remaining part of the right hand side of (\ref{finalsieveeq}) (for fixed $N=5$), which is
\begin{equation}
\label{expsum}
\frac{1}{|\text{PGL}_3(k)|}\sum_{|\lambda|>5}\sum_{C\in \mathbf{C}([\lambda,1^1])}\delta_k^1(C)\cdot\sigma_5(\lambda).
\end{equation}
The contribution of a curve to this sum depends on $\delta_k^1(C)$ and on the fields its singularities are defined over.
This information is contained in the curve's type.
\begin{definition}
	We define the \textit{type} of a projective plane $k$-quintic to be given by the following information:
	\begin{itemize}
		\item the degree and multiplicity of all its irreducible components,
		\item the number of singularities of each irreducible component and their delta invariants,
		\item the number of points in which the subsets of irreducible components intersects,
		\item the fields over which the singularities and the irreducible components are defined.
	\end{itemize}
\end{definition}

We will compute the number of curves of each type where the singularities form a $[\lambda,1^1]$-tuple such that $|\lambda|>5$, $\sigma_5(\lambda)\neq0$ and there is at least 1 singularity of delta invariant 1.
\begin{lemma}
	\label{inftylemma}
	If $\lambda$ is an infinite partition such that there is a curve $C\in\mathbf{C}([\lambda,1^1])$ with $\delta_k^1(C)\neq0$, then $\sigma_5(\lambda)=0$.
\end{lemma}
\begin{proof}
	Because $C$ has an singularity of delta invariant 1, the double component of $C$ can only be 1 double (or triple) line.
	Let $\eta$ be the partition such that the singularities of $C$ that are not on the double line form a $\eta$-tuple of points.
	Let $\rho$ be the infinite partition such that $\rho_i$ is the number of conjugate $i$-tuples in $\mathbb{P}^1$, then we have:
	\begin{align*}
	\sigma_5(\lambda) &= -\sum_{\substack{\mu\subset\lambda \\ |\mu|\leq 5}}(-1)^{\sum_i \mu_i}\prod_{i=1}^v\binom{\lambda_i}{\mu_i} \\
	&= -\sum_{\substack{\nu\subset\eta \\ |\nu|\leq 5}}\sum_{\substack{\mu\subset\rho \\ |\mu|\leq 5-|\nu|}}(-1)^{\sum_i \mu_i+\nu_i}\prod_{i=1}^v\binom{\rho_i}{\mu_i}\binom{\eta_i}{\nu_i} \\
	&= -\sum_{\substack{\nu\subset\eta \\ |\nu|\leq 5}}(-1)^{\sum_i \nu_i}\prod_{i=1}^v\binom{\eta_i}{\nu_i}\sum_{|\mu|\leq 5-|\nu|}(-1)^{\sum_i \mu_i}|\mathbb{P}^1(\mu)|
	\end{align*}
	Since $|\nu|\leq|\eta|\leq3$, we can use (\ref{piiszero}) to deduce
	\[\sum_{|\mu|\leq 5-|\nu|}(-1)^{\sum_i \mu_i}|\mathbb{P}^1(\mu)|
	=\sum_{i=0}^{5-|\nu|}\pi_i(\mathbb{P}^1)
	=\sum_{i=0}^\infty\pi_i(\mathbb{P}^1)
	=\frac{1}{Z(\mathbb{P}^1;1)}
	=0.\]
\end{proof}
By Lemma \ref{inftylemma} we can assume that all curves in $\mathbf{C}([\lambda,1^1])$ have no irreducible components of higher multiplicity.

The contribution of the different types of curves is described in the tables of the following lemma.
The first columns describe the fields over which the components and/or points are defined.
The column marked $\delta_k^1$ contains the number of singularities of delta invariant 1.
The column marked $|\{C\}|$ contains the number of curves of each type divided by $|\text{PGL}_3(k)|$.
\begin{lemma} 
\label{explicitcounting}
 The contributions to Equation (\ref{expsum}) of the different types of curves is as follows:
 (The cases where $\sigma_5(\lambda)=0$ or $\delta_k^1=0$ are omitted.)
 \vskip 2mm
 \small Curves consisting of 5 lines contribute:\normalsize
 \vskip -2mm
	\[\small\begin{tabular}{cccc}
		\emph{lines} & \emph{points} & $\delta_k^1$ & $|\{C\}|$ \\
		\midrule
		\multicolumn{4}{c}{the lines are in general position:} \\
		$[1^5]$ & $[1^{10}]$ & $10$ & $\frac{1}{120}(q-2)(q-3)$ \\
		$[1^3,2^1]$ & $[1^4,2^3]$ & $4$ & $\frac{1}{12}q(q-1)$ \\
		$[1^2,3^1]$ & $[1^1,3^3]$ & $1$ & $\frac{1}{6}(q+1)q$ \\
		\multicolumn{4}{c}{3 lines intersect in the same point:} \\
		$[1^5]$ & $[1^8]$ & $7$ & $\frac{1}{12}(q-2)$ \\ 
		$[1^3,2^1]$ & $[1^4,2^2]$ & $3$ & $\frac{1}{4}q$ \\
	\end{tabular}\normalsize\]
	
 \small Curves consisting of a conic and 3 lines contribute:\normalsize
 \[\resizebox{.9\textwidth}{!}{\begin{tabular}{ccccc}
		\emph{lines} & \emph{points} & $\delta_k^1$ & $|\{C\}|$ \emph{for} $\text{char}(k)\neq2$ & $|\{C\}|$ \emph{for} $\text{char}(k)=2$\\
		\midrule
		\multicolumn{5}{c}{general position:} \\
		$[1^3]$ & $[1^9]$ & $9$ & $\frac{1}{48}(q^2-7q+11)(q-3)$ & $\frac{1}{48}(q-2)(q-4)^2$ \\
		& $[1^7,2^1]$ & $7$ & $\frac{1}{16}(q^2-3q+1)(q-1)$ & $\frac{1}{16}q(q-2)^2$ \\
		& $[1^5,2^2]$ & $5$ & $\frac{1}{16}(q^3-2q^2-1)$ & $\frac{1}{16}q^2(q-2)$ \\
		& $[1^3,2^3]$ & $3$ & $\frac{1}{48}(q^2-3q+1)(q-1)$ & $\frac{1}{48}q(q-2)^2$ \\
		$[1^1,2^1]$ & $[1^3,2^3]$ & $3$ & $\frac{1}{8}(q^2-q-1)(q-1)$ & $\frac{1}{8}q^2(q-2)$ \\
		& $[1^1,2^4]$ & $1$ & $\frac{1}{8}(q^2-q-3)(q-3)$ & $\frac{1}{8}(q^2-2q-4)(q-2)$ \\
		& $[1^3,2^1,4^1]$ & $3$ & $\frac{1}{8}(q^2-q+1)(q+1)$ & $\frac{1}{8}q^3$ \\
		& $[1^1,2^2,4^1]$ & $1$ & $\frac{1}{8}(q^2-q-1)(q-1)$ & $\frac{1}{8}q^2(q-2)$ \\
		\multicolumn{5}{c}{the 3 lines pass through 1 point:} \\
		$[1^3]$ & $[1^7]$ & $6$ & $\frac{1}{48}(q-3)^2$ & $\frac{1}{48}(q-2)(q-4)$ \\
		& $[1^5,2^1]$ & $4$ & $\frac{1}{16}(q-1)^2$ & $\frac{1}{16}q(q-2)$ \\
		& $[1^3,2^2]$ & $2$ & $\frac{1}{16}(q-1)^2$ & $\frac{1}{16}q(q-2)$ \\
		$[1^1,2^1]$ & $[1^3,2^2]$ & $2$ & $\frac{1}{8}(q^2-2q-1)$ & $\frac{1}{8}q(q-2)$ \\
		& $[1^3,4^1]$ & $2$ & $\frac{1}{8}(q+1)(q-1)$ & $\frac{1}{8}q^2$ \\
		\multicolumn{5}{c}{1 of the lines is tangent to the conic:} \\
		$[1^3]$ & $[1^8]$& $7$ & $\frac{1}{8}(q-3)^2$ & $\frac{1}{8}(q-2)(q-4)$ \\
		& $[1^6,2^1]$ & $5$ & $\frac{1}{4}(q-1)^2$ & $\frac{1}{4}q(q-2)$ \\
		& $[1^4,2^2]$ & $3$ & $\frac{1}{8}(q-1)^2$ & $\frac{1}{8}q(q-2)$ \\
		\multicolumn{5}{c}{2 of the lines are tangent to the conic:} \\
		$[1^3]$ & $[1^7]$ & $5$ & $\frac{1}{4}(q-3)$ & $\frac{1}{4}(q-2)$ \\
		& $[1^5,2^1]$ & $3$ & $\frac{1}{4}(q-1)$ & $\frac{1}{4}q$ \\
		$[1^1,2^1]$ & $[1^3,2^2]$ & $3$ & $\frac{1}{4}(q-1)$ & $\frac{1}{4}q$ \\
		& $[1^1,2^3]$ & $1$ & $\frac{1}{4}(q-3)$ & $\frac{1}{4}(q-2)$ \\
		\multicolumn{5}{c}{2 of the lines intersect on the conic:} \\
		$[1^3]$ & $[1^7]$ & $6$ & $\frac{1}{4}(q-2)(q-3)$ & $\frac{1}{4}(q-2)(q-3)$ \\
		& $[1^5,2^1]$ & $4$ & $\frac{1}{4}q(q-1)$ & $\frac{1}{4}q(q-1)$ \\
		$[1^1,2^1]$ & $[1^3,2^2]$ & $2$ & $\frac{1}{4}q(q-1)$ & $\frac{1}{4}q(q-1)$ \\
	\end{tabular}}\]

 \small Curves consisting of 2 conics and a line contribute:\normalsize
 \vskip -4mm
	 \[\resizebox{\textwidth}{!}{\begin{tabular}{ccccc}
		\emph{conic intersection} & \emph{points} & $\delta_k^1$ & $|\{C\}|$ \emph{for} $\text{char}(k)\neq2$ & $|\{C\}|$ \emph{for} $\text{char}(k)=2$\\
		\midrule
		\multicolumn{5}{c}{general position and the conics are defined over $k$:} \\
		$[1^4]$ & $[1^8]$ & $8$ & $\frac{1}{192}(q^2-9q+17)(q-3)(q-5)$ & $\frac{1}{192}(q-2)(q-4)(q-5)(q-6)$ \\
	 	& $[1^6,2^1]$ & $6$ & $\frac{1}{96}(q^2-5q+3)(q-1)(q-3)$ & $\frac{1}{96}q(q-2)(q-3)(q-4)$ \\
		& $[1^4,2^2]$ & $4$ & $\frac{1}{192}(q^2-q+1)(q-1)(q-3)$ & $\frac{1}{192}q(q-1)(q-2)^2$ \\		
		$[1^2,2^1]$ & $[1^6,2^1]$ & $6$ & $\frac{1}{32}(q^2-3q+1)(q-1)(q-3)$ & $\frac{1}{32}q(q-2)^2(q-3)$ \\
		& $[1^4,2^2]$ & $4$ & $\frac{1}{16}(q^3-2q^2-1)(q-1)$ & $\frac{1}{16}q^2(q-1)(q-2)$ \\		
		$[1^1,3^1]$ & $[1^5,3^1]$ & $5$ & $\frac{1}{24}(q+1)q(q-1)(q-2)$ & $\frac{1}{24}q(q+1)(q-1)(q-2)$ \\
		& $[1^3,2^1,3^1]$ & $3$ & $\frac{1}{12}q^2(q+1)(q-1)$ & $\frac{1}{12}q^2(q+1)(q-1)$ \\
		& $[1^1,2^2,3^1]$ & $1$ & $\frac{1}{24}(q+1)q(q-1)(q-2)$ & $\frac{1}{24}q(q+1)(q-1)(q-2)$ \\		
		$[2^2]$ & $[1^4,2^2]$ & $4$ & $\frac{1}{64}(q^3-2q^2-2q-1)(q-3)$ & $\frac{1}{64}q(q^2-3q-2)(q-2)$ \\		
		$[4^1]$ & $[1^4,4^1]$ & $4$ & $\frac{1}{32}(q^2-q-1)(q+1)(q-1)$ & $\frac{1}{32}q^2(q+1)(q-2)$ \\
		\multicolumn{5}{c}{general position and the conics form a conjugate pair:} \\
		$[1^4]$ & $[1^4,2^2]$ & $4$ & $\frac{1}{96}(q^3-4q^2+4q+3)(q-1)$ & $\frac{1}{96}q(q-1)(q-2)^2$ \\
		& $[1^4,4^1]$ & $4$ & $\frac{1}{96}(q^2-3q+3)(q+1)(q-1)$ & $\frac{1}{96}q^2(q-1)(q-2)$ \\
		$[1^1,3^1]$ & $[1^1,2^2,3^1]$ & $1$ & $\frac{1}{12}(q+1)q(q-1)(q-2)$ & $\frac{1}{12}(q+1)q(q-1)(q-2)$ \\
		& $[1^1,3^1,4^1]$ & $1$ & $\frac{1}{12}(q+1)q^2(q-1)$ & $\frac{1}{12}(q+1)q^2(q-1)$ \\
		\multicolumn{5}{c}{the line is tangent to 1 of the conics:} \\
		$[1^4]$ & $[1^7]$ & $6$ & $\frac{1}{48}(q-3)(q-4)(q-5)$ & $\frac{1}{48}(q-2)(q-4)(q-6)$ \\
		& $[1^5,2^1]$ & $4$ & $\frac{1}{48}(q-1)(q-2)(q-3)$ & $\frac{1}{48}q(q-2)(q-4)$ \\
		$[1^2,2^1]$ & $[1^5,2^1]$ & $4$ & $\frac{1}{8}(q-1)^2(q-2)$ & $\frac{1}{8}q(q-2)^2$ \\
		& $[1^3,2^2]$ & $2$ & $\frac{1}{8}q(q-1)^2$ & $\frac{1}{8}q^2(q-2)$ \\
		$[1^1,3^1]$ & $[1^4,3^1]$ & $3$ & $\frac{1}{6}(q+1)q(q-1)$ & $\frac{1}{6}(q+1)q(q-1)$ \\
		& $[1^2,2^1,3^1]$ & $1$ & $\frac{1}{6}(q+1)q(q-1)$ & $\frac{1}{6}(q+1)q(q-1)$ \\
		$[2^2]$ & $[1^3,2^2]$ & $2$ & $\frac{1}{16}q(q-1)(q-3)$ & $\frac{1}{16}q(q-2)^2$ \\
		$[4^1]$ & $[1^3,4^1]$ & $2$ & $\frac{1}{8}(q+1)q(q-1)$ & $\frac{1}{8}q^3$ \\
		\multicolumn{5}{c}{the conics are defined over $k$ and intersect in 3 points:} \\
		$[1^3]$ & $[1^7]$ & $6$ & $\frac{1}{16}(q-3)^2(q-5)$ & $\frac{1}{16}(q-2)(q-4)(q-5) $ \\
		& $[1^5,2^1]$ & $4$ & $\frac{1}{8}(q-1)^2(q-3)$ & $\frac{1}{8}q(q-2)(q-3)$ \\
		& $[1^3,2^2]$ & $2$ & $\frac{1}{16}(q-1)^3$ & $\frac{1}{16}q(q-1)(q-2)$ \\
		$[1^1,2^1]$ & $[1^5,2^1]$ & $4$ & $\frac{1}{16}(q-1)^2(q-3)$ & $\frac{1}{16}q(q-2)(q-3)$ \\
		& $[1^3,2^2]$ & $2$ & $\frac{1}{8}(q-1)^3$ & $\frac{1}{8}q(q-1)(q-2)$ \\
		\multicolumn{5}{c}{the conics form a conjugate pair and intersect in 3 points:} \\
		$[1^1,2^1]$ & $[1^3,2^2]$ & $2$ & $\frac{1}{8}(q^2-2q-1)(q-1)$ & $\frac{1}{8}q(q-1)(q-2)$ \\
		& $[1^3,4^1]$ & $2$ & $\frac{1}{8}(q+1)(q-1)^2$ & $\frac{1}{8}q^2(q-1)$ \\ 
	 \end{tabular}}\]
 
   \small Curves consisting of a cubic and 2 lines contribute: \normalsize
 \[\small\begin{tabular}{cccc}
 \emph{lines} & \emph{points} & $\delta_k^1$ & $|\{C\}|$ \\
 \midrule
 \multicolumn{4}{c}{a nonsingular cubic:} \\
 $[1^2]$ & $[1^7]$ & $7$ & $\frac{1}{72}(q^4-3q^3+3q^2-17q+36)(q-2)$ \\
 & $[1^5,2^1]$ & $5$ & $\frac{1}{12}(q^3-q^2+q-3)q(q-2)$ \\
 & $[1^3,2^2]$ & $3$ & $\frac{1}{8}(q^2-q+2)(q+1)q(q-1)$ \\
 & $[1^4,3^1]$ & $4$ & $\frac{1}{18}(q^2-q+1)(q+1)q(q-2)$ \\
 & $[1^1,3^2]$ & $1$ & $\frac{1}{18}(q^3-2)(q+1)q$ \\
 & $[1^2,2^1,3^1]$ & $2$ & $\frac{1}{6}(q^2-q+1)(q+1)q^2$ \\
 $[2^1]$ & $[1^1,2^3]$ & $1$ & $\frac{1}{12}(q^3-q^2+4)(q^2-3)$ \\
 & $[1^1,2^1,4^1]$ & $1$ & $\frac{1}{4}(q+1)q^2(q-1)^2$ \\
 & $[1^1,6^1]$ & $1$ & $\frac{1}{6}(q^3-q^2-2)q^2$ \\
 \multicolumn{4}{c}{a singular cubic:} \\
 $[1^2]$ & $[1^8]$ & $8$ & $\frac{1}{72}(q-2)(q-3)(q-4)(q-5)$ \\
 & $[1^6,2^1]$ & $6$ & $\frac{1}{12}q(q-1)(q-2)(q-3)$ \\
 & $[1^4,2^2]$ & $4$ & $\frac{1}{8}(q+1)q(q-1)(q-2)$ \\
 & $[1^5,3^1]$ & $5$ & $\frac{1}{18}(q+1)q(q-1)(q-2)$ \\
 & $[1^3,2^1,3^1]$ & $3$ & $\frac{1}{6}(q+1)q^2(q-1)$ \\
 \multicolumn{4}{c}{a singular cubic and 1 line is tangent to it:} \\
 $[1^2]$ & $[1^7]$ & $6$ & $\frac{1}{6}(q-2)(q-3)(q-4)$ \\
 & $[1^5,2^1]$ & $4$ & $\frac{1}{2}q(q-1)(q-2)$ \\
 & $[1^4,3^1]$ & $3$ & $\frac{1}{3}(q+1)q(q-1)$ \\
 \end{tabular}\normalsize\]
 \newpage
 
 \small Curves consisting of a cubic and a conic contribute: \normalsize
 \vskip -3mm
 \[\resizebox{\textwidth}{!}{\begin{tabular}{cccc}
 	\emph{points} & $\delta_k^1$ & $|\{C\}|$ \emph{for} $\text{char}(k)\neq2$ & $|\{C\}|$ \emph{for} $\text{char}(k)=2$\\
 	\midrule
 	$[1^7]$ & $7$ & $\frac{1}{720}(q^2-9q+15)(q-3)(q-5)(q-7)$ & $\frac{1}{720}(q-2)(q-4)^2(q-6)(q-8)$ \\
 	$[1^5,2^1]$ & $5$ & $\frac{1}{48}(q^3-6q^2+10q-1)(q-1)(q-3)$ & $\frac{1}{48}(q-2)^3q(q-4)$ \\
 	$[1^3,2^2]$ & $3$ & $\frac{1}{16}(q^2-q-1)(q+1)(q-1)(q-3)$ & $\frac{1}{16}(q^2-2q-4)q^2(q-2)$ \\
 	$[1^1,2^3]$ & $1$ & $\frac{1}{48}(q^2-q-3)(q^2-2q-7)(q-3)$ & $\frac{1}{48}(q^2-2q-4)(q+2)(q-2)(q-4)$ \\
 	$[1^4,3^1]$ & $4$ & $\frac{1}{18}(q^2-3q+3)(q+1)q(q-1)$ & $\frac{1}{18}(q+1)q(q-1)^2(q-2)$ \\
 	$[1^1,3^2]$ & $1$ & $\frac{1}{18}(q^2+2q+3)q^2(q-2)$ & $\frac{1}{18}(q^2+2q+3)(q+1)(q-1)(q-2)$ \\
 	$[1^2,2^1,3^1]$ & $2$ & $\frac{1}{6}(q^2-q-1)(q+1)q(q-1)$ & $\frac{1}{6}(q+1)^2q(q-1)(q-2)$ \\
 	$[1^3,4^1]$ & $3$ & $\frac{1}{8}(q^2-q+1)(q+1)^2(q-1)$ & $\frac{1}{8}(q^2-2)q^3$ \\
 	$[1^1,2^1,4^1]$& $1$ & $\frac{1}{8}(q^2-q-1)(q+1)(q-1)^2$ & $\frac{1}{8}(q^2-2)q^2(q-2)$ \\
 	$[1^2,5^1]$ & $2$ & $\frac{1}{5}(q^2+1)q^2(q+1)$ & $\frac{1}{5}(q^2+1)(q+1)^2(q-1)$ \\
 	$[1^1,6^1]$ & $1$ & $\frac{1}{6}(q^2+q+2)q^2(q-1)$ & $\frac{1}{6}(q^2+q+2)(q+1)(q-1)^2$ \\
 	\end{tabular}}\]
 \vskip 5mm
 \small Curves consisting of a quartic and a line contribute:\normalsize
 \vskip -3mm
 \[\small\begin{tabular}{cccc}
 \emph{sing. of quadric} & \emph{points} & $\delta_k^1$ & $|\{C\}|$ \\
 \midrule
 $[1^3]$ & $[1^7]$ & $7$ & $\frac{1}{144}(q-2)(q-3)^2(q-4)(q-5)$ \\
 & $[1^5,2^1]$ & $5$ & $\frac{1}{24}q(q-1)^2(q-2)(q-3)$ \\
 & $[1^3,2^2]$ & $3$ & $\frac{1}{48}(q+1)q(q-1)(q-2)(q-3)$ \\
 & $[1^4,3^1]$ & $4$ & $\frac{1}{18}(q+1)q^2(q-1)(q-2)$ \\
 & $[1^3,4^1]$ & $3$ & $\frac{1}{24}(q+1)q^2(q-1)^2$ \\
 $[1^1,2^1]$ & $[1^5,2^1]$ & $5$ & $\frac{1}{48}q(q-1)(q-2)(q-3)^2$ \\
 & $[1^3,2^2]$ & $3$ & $\frac{1}{8}(q+1)q(q-1)^2(q-2)$ \\
 & $[1^1,2^3]$ & $1$ & $\frac{1}{16}(q^2-q-4)(q+1)(q-2)(q-3)$ \\
 & $[1^2,2^1,3^1]$ & $2$ & $\frac{1}{6}(q+1)q^3(q-1)$ \\
 & $[1^1,2^1,4^1]$ & $1$ & $\frac{1}{8}(q+1)q^2(q-1)^2$ \\
 $[3^1]$ & $[1^4,3^1]$ & $4$ & $\frac{1}{72}(q+1)q(q-1)(q-2)(q-3)$ \\
 & $[1^2,2^1,3^1]$ & $2$ & $\frac{1}{12}(q+1)q^2(q-1)^2 $ \\
 & $[1^1,3^2]$ & $1$ & $\frac{1}{9}(q^3-q-3)(q+1)q$ \\
 \end{tabular}\normalsize\]

\vskip 2mm
\end{lemma}
\begin{proof}
	We will illustrate the methods used to prove the lemma by showing the calculations for a select few cases, the other cases are computed in similar ways. In all cases we have chosen all the points to be defined over $k$ in order to simplify the notation.
	The computations can easily be adapted to points over field extensions.
\\
\\
\textbf{Five $k$-lines in general position.}
\\
	First we take 2 $k$-points and then through each of these points a pair of $k$-lines such that none of the lines passes through both points.
	This can be done in $\binom{q^2+q+1}{2}\binom{q}{2}^2$ ways.
	The 4 lines have 6 intersection points.
	Take as the fifth line any line over $k$ that does not pass through any of these points.
	There are $(q-2)(q-3)$ such lines.
	We can end up with the same 5 lines if we start with another choice of 2 of the intersection points such that none of the 5 lines passes through both points.
	There are $15$ such choices, so we obtain
	\[\frac{1}{|\text{PGL}_3(k)|}\cdot\frac{1}{15}\binom{q^2+q+1}{2}\binom{q}{2}^2(q-2)(q-3)=\frac{1}{120}(q-2)(q-3).\]
	\vskip 2mm \noindent
\textbf{One conic and three $k$-lines with two of the lines tangent to the conic.}
\\
	The plane conics form a $\mathbb{P}^5$ and every reducible $k$-conic is either a pair of different $k$-lines, a conjugate pair of lines or a double $k$-line.
	So the number of irreducible plane conics over $k$ is given by
	\[\frac{q^6-1}{q-1}-\binom{q^2+q+1}{2}-\frac{1}{2}(q^4-q)-(q^2+q+1)=(q^2+q+1)q^2(q-1).\]
	Pick 2 $k$-points $A,B$ on the conic and take the tangent lines at these points, they meet in a point $P$.
	Choose 2 more $k$-points on the conic and take the line through them.
	We need to subtract the case where $P$ is on all 3 lines.
	If $\text{char}(k)\neq2$ then the lines through $A$ and $B$ are the only lines through $P$ that are tangent to the conic.
	There are $q-1$ other $k$-points on the conic that each determine a line through $P$.
	Each line is selected twice this way, so we obtain
	\[\frac{1}{|\text{PGL}_3(k)|}\cdot(q^2+q+1)q^2(q-1)\binom{q+1}{2}\left(\binom{q-1}{2}-\frac{q-1}{2}\right)=\frac{1}{4}(q-3).\]
	If $\text{char}(k)=2$ then all lines through $P$ are tangent to the conic, so we obtain
	\[\frac{1}{|\text{PGL}_3(k)|}\cdot(q^2+q+1)q^2(q-1)\binom{q+1}{2}\binom{q-1}{2}=\frac{1}{4}(q-2).\]
\textbf{Two $k$-conics and one line in general position.}
\\
	This is the most complicated case.
	Assume that $\text{char}(k)\neq2$.
	There are $\frac{1}{6}\binom{q^2+q+1}{2}(q^4-2q^3+q^2)$ ways to pick 4 $k$-points $P_1,P_2,P_3,P_4$ such that there are no 3 on a line.
	We write $\mathcal{P}$ for the pencil of conics through $P_1,P_2,P_3,P_4$.
	There are 6 lines through $P_1,P_2,P_3,P_4$ and these lines intersect in 7 $k$-points.
	Let $Q_1,Q_2,Q_3$ denote the other 3 intersection points.
	
	Let $L$ be any line such that none of $P_1,P_2,P_3,P_4$ is on $L$.
	We have a degree 2 map $\mathbb{P}^1\rightarrow\mathbb{P}^1$ that sends a point $R$ on $L$ to the conic in $\mathcal{P}$ through $R$.
	By Hurwitz's theorem there are 2 branch points, we call these 2 conics the \textit{tangent conics} to $L$.
	There are $3$ $k$-lines passing through 2 of $Q_1,Q_2,Q_3$.
	Let $L$ be one such a line.
	Both tangent conics for $L$ are reducible and each one intersects $L$ in 1 of the $Q_i$.
	There are $q-1$ other $k$-points on $L$ so there are $\frac{q-1}{2}$ conics in $\mathcal{P}(k)$ that intersect $L$ in 2 $k$-points.
	Exactly 1 of these conics is reducible so we have $\frac{q-3}{2}$ irreducible conics.
	
	There are $3(q-3)$ $k$-lines passing through precisely 1 of $Q_1,Q_2,Q_3$ and not through any of $P_1,P_2,P_3,P_4$.
	Let $L$ be such a line.
	One of the tangent conics for $L$ intersects $L$ in the $Q_i$ point so the other tangent conic for $L$ is also defined over $k$.
	There are $\frac{q-5}{2}$ irreducible conics in $\mathcal{P}(k)$ that intersect $L$ in 2 $k$-points.
	
	There are $q-2$ irreducible conics in $\mathcal{P}(k)$ that each have $q-3$ $k$-points besides $P_1,P_2,P_3,P_4$.
	This means we have
	\[\frac{1}{2}((q-2)(q-3)-3(q-3))=\frac{1}{2}(q-5)(q-3)\]
	$k$-lines that have 2 irreducible tangent conics over $k$.
	For every such line there are $\frac{q-7}{2}$ irreducible conics in $\mathcal{P}(k)$ that intersect it in 2 $k$-points.
	
	There are $(q-3)^2$ $k$-lines not passing through any of $P_1,P_2,P_3,P_4,Q_1,Q_2,Q_3$.
	So there are 
	\[(q-3)^2-\frac{1}{2}(q-5)(q-3)=\frac{1}{2}(q-3)(q-1)\]
	$k$-lines that have a conjugate pair of irreducible tangent conics.
	For every such line there are $\frac{q-5}{2}$ irreducible conics in $\mathcal{P}(k)$ that intersect it in 2 $k$-points.
	
	Putting everything together gives us
	\begin{multline*}
	\frac{1}{|\text{PGL}_3(k)|}\frac{1}{6}\binom{q^2+q+1}{2}(q^4-2q^3+q^2)
	\Bigg(
	3\binom{\frac{q-3}{2}}{2}
	+3(q-3)\binom{\frac{q-5}{2}}{2} \\
	+\frac{1}{2}(q-3)(q-5)\binom{\frac{q-7}{2}}{2}
	+\frac{1}{2}(q-1)(q-3)\binom{\frac{q-5}{2}}{2}
	\Bigg)
	=\frac{1}{192}(q^2-9q+17)(q-3)(q-5).
	\end{multline*}
	Now for the case where $\text{char}(k)=2$:
	Choose $P_1,P_2,P_3,P_4$ and define $\mathcal{P}$ and $Q_1,Q_2,Q_3$ as before.
	There is 1 line through $Q_1,Q_2,Q_3$ and it intersects every conic in $\mathcal{P}$ in 1 point.
	For any other line not through any of $P_1,P_2,P_3,P_4$ there is precisely 1 conic in $\mathcal{P}$ that intersects it in 1 point.
	
	There are $3(q-2)$ $k$-lines through 1 of $Q_1,Q_2,Q_3$ and not through any of $P_1,P_2,P_3,P_4$.
	For any such line there are $\frac{q-4}{2}$ irreducible conics in $\mathcal{P}(k)$ that intersect it in 2 $k$-points.
	
	There are $(q-2)(q-4)$ $k$-lines not through any of $P_1,P_2,P_3,P_4,Q_1,Q_2,Q_3$.
	For any such line there are $\frac{q-6}{2}$ irreducible conics in $\mathcal{P}(k)$ that intersect it in 2 $k$-points.
	
	Putting these two cases together we obtain
	\begin{multline*}
	\frac{1}{|\text{PGL}_3(k)|}\frac{1}{6}\binom{q^2+q+1}{2}(q^4-2q^3+q^2)
	\Bigg(
	3(q-2)\binom{\frac{q-4}{2}}{2}
	+(q-2)(q-4)\binom{\frac{q-6}{2}}{2}
	\Bigg) \\
	=\frac{1}{192}(q-2)(q-4)(q-5)(q-6).
	\end{multline*}

\noindent
\textbf{A singular cubic and two $k$-lines.}
\\
	Take a $k$-point $P$ and 2 $k$-lines not passing through $P$.
	Then choose 3 $k$-points on each line, such that no 2 of these 6 points are on a line through $P$ and none of the points is the intersection of the 2 lines.
	There is precisely 1 irreducible cubic through the 6 points that has a singularity at $P$ of delta invariant 1.
	We obtain
	\[\frac{1}{|\text{PGL}_3(k)|}\binom{q^2+q+1}{2}(q^2-q)\binom{q}{3}\binom{q-3}{3}=\frac{1}{72}(q-2)(q-3)(q-4)(q-5).\]
\vskip 2mm
\noindent
\textbf{A nonsingular cubic and two $k$-lines.}
\\
	Take 2 $k$-lines $L,L'$ and 3 $k$-points on each line that are not the intersection point.
	There is a $\mathbb{P}^3$ of cubics through the 6 points.
	From this we subtract the reducible cubics through the 6 points.
	We get a reducible cubic by taking any $k$-line together with the lines $L,L'$.
	Or we can take a line though 2 of the points and an irreducible conic through the other 4 points.
	Finally we can take 3 lines connecting the 6 points such that none of the lines is $L$ or $L'$.
	
	We also subtract the singular irreducible cubics we counted above.
	This gives us
	\begin{multline*}
	\frac{1}{|\text{PGL}_3(k)|}\binom{q^2+q+1}{2}\binom{q}{3}^2
	\left(|\mathbb{P}^3|-|\mathbb{P}^2|-9(q-2)-6\right) \\
	-\frac{1}{72}(q-2)(q-3)(q-4)(q-5)
	=\frac{1}{72}(q^4-3q^3+3q^2-17q+36)(q-2).\qedhere 
	\end{multline*}
\end{proof}

By putting together the information collected in Lemma~\ref{explicitcounting} we deduce:
\begin{lemma}
	\label{explicitresult}
\[\frac{1}{|\text{PGL}_3(k)|}\sum_{|\lambda|>5}\sum_{C\in \mathbf{C}([\lambda,1^1])}\delta_k^1(C)\cdot\sigma_5(\lambda)=1\]
\end{lemma}
\begin{proof}
	By Lemma~\ref{inftylemma} the types of curves that contribute to this sum are all listed in Lemma~\ref{explicitcounting}.
	We compute the following sum over the rows of the tables, where we consider each type of curve to have a $[\lambda,1^1]$-tuple of singular points.
	For all characteristics we obtain the same result
	\[\sum \sigma_5(\lambda)\cdot\delta_k^1(C)\cdot|\{C\}|=1.\]
\end{proof}

\section{Combining results and verification.}
\label{results}

Applying Lemmas \ref{splitresult}, \ref{nonsplitresult}, \ref{cuspresult} and \ref{explicitresult} to Equation (\ref{finalsieveeq}) we obtain
\[|\mathcal{T}_5(k)|=\frac{1}{2}(q^{11}+q^{10})+\frac{1}{2}(q^{11}-q^{10})+q^{10}-q^8+1,\]
which proves our main result:
\begin{theorem}
\label{maintheorem}
The number of smooth trigonal curves of genus $5$ over a finite field $\mathbb{F}_q$, weighted by the size of their automorphism group, is given by
\[|\mathcal{T}_5(\mathbb{F}_q)|=q^{11}+q^{10}-q^{8}+1.\]
\end{theorem}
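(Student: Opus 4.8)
The plan is to convert $\#\mathcal{T}_5(k)$ into a naive point count of plane quintics, evaluate that count by a truncated (partial) sieve over the possible extra singular points, and then repair the truncation by an explicit census of degenerate quintics. First I would invoke the bijection of Section~\ref{whatwecount} between smooth genus-five trigonal curves and plane quintics having exactly one singularity, which is of delta-invariant one; since for such a curve the $g^1_3$ and $g^2_5$ are canonical, $\text{Aut}_k(C)$ coincides with the $\text{PGL}_3$-stabilizer of $C$, so the orbit--stabilizer identity turns $\sum_C 1/|\text{Aut}_k(C)|$ into $|T(k)|/|\text{PGL}_3(k)|$ with $T$ the honest set of such quintics. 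I would then stratify $T(k)$ by the type of its unique (necessarily $k$-rational) singularity, namely a split node, a non-split node, or an ordinary cusp, move that point to $\mathbf{P}=(0:0:1)$ with normalized tangent data, and express each contribution as $|\mathbf{C}_\bullet(\mathbf{P},[])|$ (with $\bullet$ standing for one of the three cases) divided by the appropriate small stabilizer, whose order ($2q^2(q-1)^2$, $2(q^4-q^2)$, and $q^3(q-1)^2$ respectively) I would read off from the explicit shape of the matrices fixing $\mathbf{P}$ and the tangent lines.

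For each of the three pieces I would run the partial sieve of Section~\ref{howwecount}: starting from $|\mathbf{C}_\bullet|$, add and subtract the loci $\mathbf{C}_\bullet(S)$ of quintics that acquire a prescribed conjugate tuple $S$ of extra singular points, grouped by the partition $\lambda$ of $|S|$ with sign $(-1)^{\sum_i\lambda_i}$ and truncated at $|\lambda|\le 5$, obtaining $\mathcal{S}_{\bullet,5}$ (the cut-off $N=5$ is chosen to balance the two halves of the computation). The computational heart is that $\mathbf{C}_\bullet(S)$ is an affine space whose dimension depends only on the projective configuration of $S\cup\{\mathbf{P}\}$; I would establish the required independence of the point-and-derivative conditions by exhibiting a nonvanishing maximal minor of the associated coefficient matrix after a $\bar{k}$-change of coordinates fixing $\mathbf{P}$, as is done in Example~\ref{genpos}. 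The decisive observation is that any configuration with four collinear points, six points on an irreducible conic, or three points on a line through $\mathbf{P}$ contributes $0$ to the signed sum: after fixing the offending line or conic the remaining points range over a $\mathbb{P}^1$ or a $\mathbb{P}^2$ (or such a space with $\mathbf{P}$ deleted, possibly after base change to $k_2$), and the alternating sum over point-tuples there vanishes at the relevant weights by Corollaries~\ref{zerocor} and~\ref{sumcor}. Hence only the general-position stratum survives, the quantities $\mathcal{S}_{\text{split},5}$, $\mathcal{S}_{\text{non-split},5}$, $\mathcal{S}_{\text{cusp},5}$ collapse to closed forms, and dividing by the three stabilizer orders gives the sieving contributions $\tfrac{1}{2}(q^{11}+q^{10})$, $\tfrac{1}{2}(q^{11}-q^{10})$, and $q^{10}-q^8$.

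It remains to evaluate the correction term $\tfrac{1}{|\text{PGL}_3(k)|}\sum_{|\lambda|>5}\sum_{C\in\mathbf{C}([\lambda,1^1])}\#\delta_k^1(C)\cdot\sigma_5(\lambda)$, which (through the combinatorial identity of Section~\ref{howwecount} expressing $|\mathbf{C}_\bullet(\mathbf{P},[])|$ in terms of $\mathcal{S}_{\bullet,5}$ and the $\sigma_5(\lambda)$) accounts for quintics having more than five singular points besides $\mathbf{P}$. I would organize the contributing curves by their \emph{type}: the components with their multiplicities and fields of definition, the pairwise intersection numbers, and the singularities with their delta-invariants and fields. Any curve with a component of multiplicity $\ge 2$ contributes $0$, once more because a residual factor equals $\sum_{w\ge 0}\pi_w(\mathbb{P}^1)=0$; the remaining reduced types fall into the finite list consisting of five lines, one conic and three lines, two conics and one line, a cubic and two lines, a cubic and a conic, and a quartic and a line. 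Each of these I would count by elementary incidence geometry over $k$, and over $k_2$ for components forming a conjugate set, treating characteristic two separately in the cases where the tangent lines to a conic become concurrent.

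The main obstacle is exactly this last census: one must argue that the list of types is complete, carry out every individual count precisely (including its degenerate behavior for small $q$), and then show that the resulting alternating sum $\sum\sigma_5(\lambda)\cdot\#\delta_k^1\cdot\#\{C\}$ over all types collapses to $1$ in every characteristic; I would cross-check this against the direct computer enumeration for $q=2$ and $q=3$. Adding the four pieces then yields $\tfrac{1}{2}(q^{11}+q^{10})+\tfrac{1}{2}(q^{11}-q^{10})+q^{10}-q^8+1=q^{11}+q^{10}-q^8+1$, which is the asserted formula.
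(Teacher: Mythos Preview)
Your proposal is correct and follows essentially the same route as the paper: the same reduction via orbit--stabilizer to the three normalized singular loci, the same partial sieve with cutoff $N=5$ yielding $\tfrac12(q^{11}+q^{10})$, $\tfrac12(q^{11}-q^{10})$, $q^{10}-q^8$, and the same explicit census of reduced types for the correction term summing to $1$. The only substantive work you flag as an obstacle---completeness of the type list and the case-by-case incidence counts in every characteristic---is exactly what the paper carries out in Section~\ref{explicitpart}.
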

As an extra check we have written computer programs that loop over all plane quintics over $\mathbb{F}_2$ and $\mathbb{F}_3$ that have an ordinary split node/non-split node/cusp with fixed tangents at $\mathbf{P}$.
For each curve we test for all points besides $\mathbf{P}$ whether they are singular or not.
This way we managed to count
\begin{equation}
\label{result1}
|\mathbf{C}_\text{split}(\mathbf{P},\lambda)|,\quad
|\mathbf{C}_\text{non-split}(\mathbf{P},\lambda)|\quad\text{and}\quad
|\mathbf{C}_\text{cusp}(\mathbf{P},\lambda)|
\end{equation}
for all partitions $\lambda$.
From the results for the empty partition we can now easily compute $|\mathcal{T}_5(\mathbb{F}_2)|$ and $|\mathcal{T}_5(\mathbb{F}_3)|$.
As an extra check the programs use the information from (\ref{result1}) to count
\[\sum_{|\lambda|=w}\left((-1)^{\sum_i \lambda_i}\cdot\sum_{S\in(\mathbb{P}^2-\{\mathbf{P}\})(\lambda)}|\mathbf{C}_\text{split}(S)|\right)\]
for all $0\leq w\leq5$ and 
\[\sum_{|\lambda|=w}|\mathbf{C}_\text{split}(\mathbf{P},\lambda)|\cdot \sigma_N(\lambda)\]
for all $6\leq w\leq9$.
The results of these computer counts agree with the counts in this article.
The programs are written in the C programming language and the source code is available at
github.com/Wennink/countingtrigonalcurves,
together with lists of the results for (\ref{result1}) and a comparison with results from this article.


\end{document}